\NeedsTeXFormat{LaTeX2e}
 
\documentclass[a4paper,11pt]{amsart}

\usepackage{amsmath, amssymb, amsthm}

\newcommand{\bZ}{\mathbb{Z}}
\newcommand{\bQ}{\mathbb{Q}}
\newcommand{\bC}{\mathbb{C}}
\newcommand{\sP}{\mathsf{P}}
\newcommand{\bP}{\mathbb{P}}
\newcommand{\bN}{\mathbb{N}}
\newcommand{\bF}{\mathbb{F}}
\newcommand{\Id}{\mathrm{Id}}
\newcommand{\cO}{\mathcal{O}}
\newcommand{\cS}{\mathcal{S}}

\newcommand{\sH}{\mathsf{H}}
\newcommand{\sF}{\mathsf{F}}
\newcommand{\sJ}{\mathsf{J}}

\newcommand{\can}{\operatorname{can}}

\newcommand{\PGL}{\mathrm{PGL}}
\newcommand{\supp}{\operatorname{supp}}
\newcommand{\bR}{\mathbb{R}}
\newcommand{\Res}{\operatorname{Res}}

\newcommand{\ord}{\operatorname{ord}}

\numberwithin{equation}{section}
\theoremstyle{plain}
\newtheorem{theorem}{Theorem}[section]
\newtheorem{lemma}[theorem]{Lemma}

\newtheorem{mainth}{Theorem}

\theoremstyle{definition}
\newtheorem{definition}[theorem]{Definition}
\newtheorem{notation}[theorem]{Notation}
\newtheorem*{convention}{Convention}
\newtheorem*{question}{Question}
\newtheorem*{acknowledgement}{Acknowledgement}
\theoremstyle{remark}
\newtheorem{remark}[theorem]{Remark}
\newtheorem{observation}[theorem]{Observation}

\begin{document}
\title[On a characterization of polynomials]{
On a characterization of polynomials among rational functions
in non-archimedean dynamics}
\author[Y\^usuke Okuyama]{Y\^usuke Okuyama}
\address{
Division of Mathematics,
Kyoto Institute of Technology, Sakyo-ku,
Kyoto 606-8585 Japan}
\email{okuyama@kit.ac.jp}
\author[Ma{\l}gorzata Stawiska]{Ma{\l}gorzata Stawiska}
\address{Mathematical Reviews, 416 Fourth St., Ann Arbor, MI 48103, USA}
\email{stawiska@umich.edu}

\date{\today}

\subjclass[2010]{Primary 37P50; Secondary 11S82, 31C15}
\keywords{canonical measure, equilibrium mass distribution,
non-archimedean dynamics, potential theory}

\begin{abstract}
 We study a question on characterizing polynomials
 among rational functions of degree $>1$ on the projective line
 over an algebraically closed field that is complete with respect to 
 a non-trivial and non-archimedean absolute value,
 from the viewpoint of dynamics and potential theory on the Berkovich
 projective line.
\end{abstract}

\maketitle

\section{Introduction}\label{sec:intro}

Let $K$ be an algebraically closed field
that is complete with respect to a non-trivial and non-archimedean 
absolute value $|\cdot|$. 
The {\em Berkovich} projective line $\sP^1=\sP^1(K)$
is, as a topological augmentation of the (classical) 
projective line $\bP^1=\bP^1(K)=K\cup\{\infty\}$,
a compact, locally compact, uniquely arcwise connected, and
Hausdorff topological space. The set $\sH^1:=\sP^1\setminus\bP^1$
is called the Berkovich upper half space in $\sP^1$.

Let $f\in K(z)$ be a rational function of degree $d>1$. 
For every $n\in\bN$, set $f^n:=f\circ f^{n-1}$, where $f^0:=\Id_{\bP^1}$.
The action of $f$ on $\bP^1$ uniquely extends to a continuous endomorphism
on $\sP^1$, which is still open, surjective, and fiber-discrete, and
preserves both $\bP^1$ and $\sH^1$.
Let us define 
the {\em Berkovich} Julia set $\sJ(f)$ of $f$
by the set of all points $\cS\in\sP^1$
such that for any open neighborhood $U$ of $\cS$ in $\sP^1$,
\begin{gather*}
 \sP^1\setminus E(f)\subset\bigcup_{n\in\bN}f^n(U),
\end{gather*}
where the set $E(f):=\{a\in\bP^1:\#\bigcup_{n\in\bN}f^{-n}(a)<\infty\}$
is called the (classical)
exceptional set of $f$ and is at most countable subset in $\bP^1$.
The local degree function $\deg_{\, \cdot}f$ on $\bP^1$ also 
canonically extends to $\sP^1$, and this extended local degree function
$\deg_{\,\cdot}(f)$ induces a canonical pullback operator $f^*$ from the space
of all Radon measures on $\sP^1$ to itself (see \S\ref{th:extension} below).
Corresponding to the construction of 
the unique maximal entropy measure in complex dynamics
(studied since Lyubich \cite{Lyubich83}, Freire--Lopes--Ma\~n\'e \cite{FLM83}, 
Ma\~n\'e \cite{Mane83}),
the $f$-{\em canonical measure} $\mu_f$ on $\sP^1$
has been constructed as the unique probability Radon measure 
$\nu$ on $\sP^1$ such that 
\begin{gather*}
 f^*\nu=d\cdot\nu\text{ on }\sP^1\quad\text{and that}\quad \nu(E(f))=0,
\end{gather*}
so in particular $\mu_f$ is invariant under $f$ in that
$f_*\mu_f=\mu_f$ on $\sP^1$.
The support of $\mu_f$ 
coincides with $\sJ(f)$ and is the minimal non-empty
and closed subset in $\sP^1$ backward invariant under $f$
(\cite{FR09}). The {\em Berkovich} Fatou set of $f$ is defined by
\begin{gather*}
 \sF(f):=\sP^1\setminus\sJ(f), 
\end{gather*}
and each component of $\sF(f)$ is called a {\em Berkovich Fatou component} of $f$.
We note that $E(f)\subset\sF(f)$.
A Berkovich Fatou component of $f$ is mapped properly to a
Berkovich Fatou component of $f$ under $f$, and the preimage of a Berkovich
Fatou component of $f$ under $f$
is the union of at most $d$ Berkovich Fatou components of $f$.

\begin{definition}
For every $z\in\sF(f)\cap\bP^1$,
let $D_z=D_z(f)$ be the Berkovich Fatou component of $f$
containing $z$.
\end{definition}

For any $z\in\sF(f)\cap\bP^1$,
the compact subset $\sP^1\setminus D_z$ in $\sP^1$
is of logarithmic capacity $>0$ with pole $z$, or equivalently,
there is the unique {\em equilibrium mass distribution}
$\nu_{z,\sP^1\setminus D_z}$ on $\sP^1\setminus D_z$
with pole $z$, which is in fact supported by $\partial D_z\subset\sJ(f)$
(we will recall some details on the logarithmic potential theory 
on $\sP^1$ in \S\ref{sec:logarithmic} below). 
If $f(\infty)=\infty\in\sF(f)$,
then $\nu_{\infty,\sP^1\setminus D_\infty}$ is invariant under $f$ in that
\begin{gather*}
 f_*(\nu_{\infty,\sP^1\setminus D_\infty})=\nu_{\infty,\sP^1\setminus D_\infty}
\quad\text{on }\sP^1
\end{gather*}
(see Lemma \ref{th:invharmonic} below).
If moreover $f\in K[z]$ or equivalently 
$f^{-1}(\infty)=\{\infty\}$, 
then $\infty\in E(f)$, $f^{-1}(D_\infty)=D_\infty$, and we can see 
\begin{gather*}
 \mu_f=\nu_{\infty,\sP^1\setminus D_{\infty}}\quad\text{on }\sP^1
\end{gather*}
(since Brolin \cite{Brolin} in complex dynamics). Let $\delta_{\cS}$
be the Dirac measure on $\sP^1$ at $\cS\in\sP^1$.

Our aim is to study 
whether polynomials can be characterized among rational functions
%a characterization of polynomials among rational functions
of degree $>1$ using potential theory in non-archimedean setting, 
corresponding to 
the studies \cite{ObaPitcher,Lopes86,Lalley92,MR92,OS11,OS18} in complex dynamics.
Concretely, we study the following question
on a characterization of polynomials among rational functions
in non-archimedean dynamics.
\begin{question}
 Let $f\in K(z)$ be a rational function of degree $>1$, and
 suppose that $f(\infty)=\infty\in\sF(f)$ 
 (so in particular $f(D_\infty)=D_\infty$)
 and that $\sJ(f)\not\subset\sH^1$. 
 Then, are the statements
\begin{center}
  (i) $f\in K[z]$\quad and\quad
 (ii) $\mu_f=\nu_{\infty,\sP^1\setminus D_\infty}$ on $\sP^1$
\end{center} 
 equivalent? 
\end{question}
 The corresponding question in complex dynamics
 has been answered affirmatively (Lopes \cite{Lopes86}).

Here are a few comments on this Question.
We already mentioned that (i) implies (ii)
(without assuming $\sJ(f)\not\subset\sH^1$). It is not difficult
to construct such $f\in K(z)\setminus K[z]$ of degree $>1$
that $f(D_\infty)=D_\infty$, that
$f(\infty)\neq\infty\in\sF(f)$, that $\sJ(f)\not\subset\sH^1$, and that
$\mu_f=\nu_{\infty,\sP^1\setminus D_\infty}$ on $\sP^1$
(e.g.,\ Remark \ref{th:easy} below).
On the other hand,
if $\sJ(f)\subset\sH^1$, then for any $g\in K(z)$
of the same degree as that of $f$
which is close enough to $f$ (in the coefficients topology), both the Berkovich
Julia set $\sJ(g)$ of $g$ and the action of $g$ on $\sJ(g)$ are {\em same} as those of $f$ (cf.\ \cite[\S5.3]{FR09}). 
Since there is $f\in K[z]$ of degree $>1$
satisfying $\sJ(f)\subset\sH^1$ 
(e.g., such $f$ that has a potentially good reduction, see below
a characterization of this condition), 
for any such $f$ and any $b\in K$, if $0<|b|\ll 1$, then
$f_b(z):=f(z)/bz\in K(z)\setminus K[z]$ is of the same degree 
as that of $f$ and satisfies that
$f_b(\infty)=\infty\in\sF(f_b)$, that $\sJ(f_b)\subset\sH^1$, and
that $\mu_{f_b}=\nu_{\infty,\sP^1\setminus D_\infty(f_b)}$ on $\sP^1$.

Recall that $f$ {\em has a potentially good reduction} if and only if
there exists a point $\cS\in\sH^1$ such that
\begin{gather*}
 f^{-1}(\cS)=\{\cS\};
\end{gather*}
then $\sJ(f)=\{\cS\}(\subset\sH^1$ so $\infty\in\sF(f)$) 
and $\mu_f=\nu_{\infty,\sP^1\setminus D_\infty}=\delta_{\cS}$ 
on $\sP^1$ (see also Remark \ref{th:potgood} below). We say $f$ has no potentially good reductions
if $f$ does not have a potentially good reduction.

We already mentioned that 
the total invariance $f^{-1}(D_\infty)=D_\infty$ of $D_\infty$
under $f$ is a necessary condition for $f\in K[z]$. 
Our first result is the following more general statement, 
under no potentially good reductions.

\begin{mainth}\label{th:polynomial}
Let $K$ be an algebraically closed field
that is complete with respect to a non-trivial and non-archimedean 
absolute value.
Let $f\in K(z)$ be a rational function of degree $>1$. 
If $\infty\in\sF(f)$, $f(D_\infty)=D_\infty$,
$\mu_f=\nu_{\infty,\sP^1\setminus D_\infty}$ on $\sP^1$,
and $f$ has no potentially good reductions, then 
\begin{gather*}
 f^{-1}(D_\infty)=D_\infty.
\end{gather*}
\end{mainth}

Our second result is that even if we assume 
in addition $\sJ(f)\subset\bP^1$, 
the latter statement (ii) does not necessarily imply the former (i) in Question. 

Pick a prime number $p$.
The $p$-adic norm $|\cdot|_p$ on $\bQ$ 
is normalized so that for any $m,\ell\in\bZ\setminus\{0\}$
not divisible by $p$ and any $r\in\bZ$,
$\bigl|\frac{m}{\ell}p^r\bigr|_p=p^{-r}$.
The completion $\bQ_p$ of $(\bQ_p,|\cdot|_p)$ is still a field, and
the extended norm $|\cdot|_p$ on $\bQ_p$ extends to 
an algebraic closure $\overline{\bQ_p}$ of $\bQ_p$ as a norm.
The completion $\bC_p$ of $(\overline{\bQ_p},|\cdot|_p)$ is 
still an algebraically closed field, and 
the extended norm $|\cdot|_p$ on $\bC_p$ is a non-trivial
and non-archimedean absolute value on $\bC_p$.
The completion $\bZ_p$ of $(\bZ,|\cdot|_p)$ is a complete discrete valued 
local ring and has the 
unique maximal ideal $p\bZ_p$,
and coincides with the ring of $\bQ_p$-integers $\{z\in\bQ_p:|z|_p\le 1\}$.
In particular, the residual field of $\bQ_p$ is $\bF_p$.

The following 
counterexample of the implication (ii)$\Rightarrow$(i)
in Question is suggested to the authors by Juan Rivera-Letelier.

\begin{mainth}\label{th:counterexample}
 Pick a prime number $p$, and set 
\begin{gather*}
 f(z):=\frac{z^p-1}{p}\in\bQ[z]\quad\text{and}\quad
 A(z):=\frac{az+b}{cz+d}\in\PGL(2,\bZ_p).
\end{gather*} 
If $c\neq 0$ and $(a,b,c,d)$ is close enough to $(1,0,0,1)$ in $(\bZ_p)^4$, then 
 there is an attracting fixed point $z_A$ of $f\circ A$ in $\bC_p\setminus\bZ_p$
$($so $z_A\in\sF(f\circ A))$
 such that 
\begin{gather*}
 \sJ(f\circ A)=\bZ_p=\sP^1(\bC_p)\setminus D_{z_A}(f\circ A)\quad\text{and}\\
 \nu_{z_A,\bZ_p}=\nu_{\infty,\bZ_p}\quad\text{on }\sP^1(\bC_p). 
\end{gather*} 
Then setting $m_A(z):=1/(z-z_A)\in\PGL(2,\bC_p)$,
 the rational function 
 $g_A(z):=m_A\circ (f\circ A)\circ m_A^{-1}\in\bC_p(z)$ is of 
 degree $p$ and satisfies 
 $g_A\not\in\bC_p[z]$, $g_A(\infty)=\infty\in\sF(g_A)$,
 $\sJ(g_A)\subset\bP^1(\bC_p)$, and
\begin{gather*}
 \mu_{g_A}=\nu_{\infty,\sP^1(\bC_p)\setminus D_\infty(g_A)}\quad\text{on }\sP^1(\bC_p).
\end{gather*}
\end{mainth}

\subsection{Organization of this article}
In Sections \ref{sec:facts} and \ref{sec:dynamics},
we prepare  background material from potential theory and dynamics, respectively.
In Section \ref{sec:computation}, we make 
preparatory computations from
potential theory and give a proof of the invariance of $\nu_{\infty,\sP^1\setminus D_\infty}$ under $f$ when $f(\infty)=\infty\in\sF(f)$. 
In Sections \ref{sec:proof} and \ref{sec:counterexample},
we show Theorems \ref{th:polynomial} and \ref{th:counterexample}, respectively.

\section{Background from potential theory on $\sP^1$}\label{sec:facts}

Let $K$ be an algebraically closed field that is
complete with respect to a non-trivial and non-archimedean
absolute value $|\cdot|$; in general,
a norm $|\cdot|$ on a field $k$ is non-trivial
if $|k|\not\subset\{0,1\}$, and is non-archimedean if $|\cdot|$
satisfies the strong triangle inequality
\begin{gather*}
 |x+y|\le\max\{|x|,|y|\}\quad\text{for any }x,y\in k.
\end{gather*}
For the foundation of potential theory on $\sP^1=\sP^1(K)$, see 
\cite[\S 5, \S 8]{BR10}, \cite[\S 7]{FJbook},
\cite[\S 3]{FR06}, \cite{Thuillierthesis},
and the survey \cite[\S 1-\S 4]{Jonsson15} and the book \cite[\S 13]{BenedettoBook}.
In what follows, we adopt a presentation from \cite[\S 2, \S 3]{OkuDivisor}.   

\begin{notation}
 Let 
\begin{gather*}
 \pi:K^2\setminus\{(0,0)\}\to\bP^1=\bP^1(K)=K\cup\{\infty\} 
\end{gather*} 
be the canonical projection such that
\begin{gather*}
 \pi(p_0,p_1)=
\begin{cases}
 p_1/p_0 &\text{if }p_0\neq 0,\\
 \infty &\text{if }p_0=0,
\end{cases}
\end{gather*} 
following the convention on coordinate of $\bP^1$ from the book \cite{FvdP04}.

On $K^2$, let $\|(p_0,p_1)\|$ be the maximum norm
 $\max\{|p_0|,|p_1|\}$.
 With the wedge product
 $(p_0,p_1)\wedge(q_0,q_1):=p_0q_1-p_1q_0$ on $K^2$,
 the normalized chordal metric $[z,w]$ on $\bP^1$ is 
 the function
 \begin{gather*}
 [z,w]:=\frac{|p\wedge q|}{\|p\|\cdot\|q\|}(\le 1)
 \end{gather*}
 on $\bP^1\times\bP^1$, where $p\in\pi^{-1}(z),q\in\pi^{-1}(w)$.
\end{notation}

\subsection{Berkovich projective line $\sP^1$}\label{th:berkovich}
A ($K$-closed) {\em disk in} $K$
is a subset in $K$ written as $\{z\in K:|z-a|\le r\}$ for some $a\in K$ and some $r\ge 0$.
By the strong triangle inequality, 
two decreasing infinite sequences of disks in $K$ either
{\em infinitely nest} or {\em are eventually disjoint}. This alternative
induces the {\em cofinal} equivalence relation 
among decreasing (or more precisely, nesting and non-increasing)
infinite sequences of disks in $K$, and
the set of all cofinal equivalence classes $\cS$ of
decreasing infinite sequences $(B_n)$ of disks in $K$ together with $\infty\in\bP^1$ 
is, as a set, nothing but $\sP^1$ (\cite[p.\ 17]{Berkovichbook}); 
if $B_{\cS}:=\bigcap_n B_n\neq\emptyset$, then $B_{\cS}$ is itself a disk in $K$,
and we also say $\cS$ is represented by $B_{\cS}$.
For example, the {\em canonical $($or Gauss$)$ point} $\cS_{\can}$ in $\sP^1$ is
represented by the the ring of $K$-integers
\begin{gather*}
 \mathcal{O}_K:=\{z\in K:|z|\le 1\}, 
\end{gather*}
and each $z\in K$ 
is represented by the disk $\{z\}$ in $K$. The above alternative between two 
(decreasing infinite sequences of) disks in $K$
also induces a canonical ordering $\succeq$ on $\sP^1$ so that
$\infty$ is the unique maximal element in $(\sP^1,\succeq)$ and that
for every $\cS,\cS'\in\sP^1\setminus\{\infty\}$ 
satisfying $B_{\cS},B_{\cS'}\neq\emptyset$, 
$\cS\succeq\cS'$ iff $B_{\cS}\supset B_{\cS'}$
(the description of $\succeq$
is a little complicated unless $B_{\cS},B_{\cS'}\neq\emptyset$),
and equips $\sP^1$ with a (profinite) tree structure. 
The topology of $\sP^1$ coincides with the weak (or observer) topology
on $\sP^1$ as a (profinite) tree,
so that $\sP^1$ is compact and 
uniquely arcwise-connected, and contains both
$\bP^1$ and $\sH^1$ as dense subsets.
For the details on the tree structure on $\sP^1$, 
see e.g.\ Jonsson \cite[\S 2]{Jonsson15}.

\subsection{Action of rational functions on $\sP^1$}\label{th:extension}
Let $h\in K(z)$ be a rational function. The action
of $h$ on $\bP^1$ uniquely extends to a continuous endomorphism on $\sP^1$.
Suppose in addition that $\deg h>0$. Then
the extended action of $h$ on $\sP^1$ is surjective and open, has discrete 
(so finite) fibers,
and preserves both $\bP^1$ and $\sH^1$, and the {\em local degree} function
$z\mapsto\deg_zh$ on $\bP^1$ also canonically extends to $\sP^1$ so that
for every $\cS\in\sP^1$,
\begin{gather*}
 \sum_{\cS'\in h^{-1}(\cS)}\deg_{\cS'}h=\deg h. 
\end{gather*}
The action of $h$ on $\sP^1$ induces the push-forward operator $h_*$
on the space of all continuous functions on $\sP^1$ to itself 
and, by duality, also the pullback operator $h^*$ 
on the space of all Radon measures on $\sP^1$ to itself; 
for every continuous test function $\phi$ on $\sP^1$,
$(h_*\phi)(\cdot)=\sum_{\cS'\in h^{-1}(\cdot)}(\deg_{\cS'}h)\cdot\phi(\cS')$
on $\sP^1$, and for every $\cS\in\sP^1$, $h^*\delta_{\cS}
=\sum_{\cS'\in h^{-1}(\cS)}(\deg_{\cS'}h)\cdot\delta_{\cS'}$
on $\sP^1$. For more details, see \cite[\S9]{BR10}, \cite[\S2.2]{FR09}.

\subsection{Kernel functions and the Laplacian on $\sP^1$}\label{th:kernels}
The {\em generalized Hsia kernel} $[\cS,\cS']_{\can}$ on $\sP^1$
{\em with respect to} $\cS_{\can}$ is a unique
upper semicontinuous and separately continuous extension
of the chordal distance function $\bP^1\times\bP^1\ni(z,z')\mapsto[z,z']$
to $\sP^1\times\sP^1$. 

More generally,
for every $z_0\in\bP^1$, the {\em generalized Hsia kernel}
\begin{gather*}
 [\cS,\cS']_{z_0}
:=
\begin{cases}
\displaystyle \frac{[\cS,\cS']_{\can}}{[\cS,z_0]_{\can}\cdot[\cS',z_0]_{\can}} & \text{on }(\sP^1\setminus\{z_0\})\times(\sP^1\setminus\{z_0\})\\ 
 +\infty & \text{on }(\{z_0\}\times\sP^1)\cup(\sP^1\times\{z_0\})
\end{cases}
\end{gather*}
on $\sP^1$ {\em with respect to} $z_0$
is a unique upper semicontinuous and separately 
continuous extension of the function $(\bP^1\setminus\{z_0\})\times(\bP^1\setminus\{z_0\})\ni(z,z')\mapsto[z,z']/([z,z_0]\cdot[z',z_0])$ as
a function $\sP^1\times\sP^1\to[0,+\infty]$.
In particular, the function
\begin{gather*}
 |\cS-\cS'|_{\infty}:=[\cS,\cS']_\infty
\end{gather*}
on $\sP^1\times\sP^1$
extends the distance function $K\times K\ni(z,z')\mapsto|z-z'|$ 
to $(\sP^1\setminus\{\infty\})\times(\sP^1\setminus\{\infty\})$,
jointly upper semicontinuously and separately continuously, and the function
\begin{gather*}
 |\cS|_\infty:=|\cS-0|_\infty(=[\cS,0]_\infty)\quad\text{on }\sP^1
\end{gather*}
extends the norm function $K\ni z\mapsto|z|$ to $\sP^1\setminus\{\infty\}$
continuously (see \cite[\S 3.4]{FR06}, \cite[\S 4.4]{BR10}). 

Let $\Omega_{\can}$ be the Dirac measure  
$\delta_{\cS_{\can}}$ on $\sP^1$ at $\cS_{\can}$.
The Laplacian $\Delta$ on $\sP^1$ is normalized so that for each $\cS'\in\sP^1$, 
\begin{gather*}
 \Delta\log[\cdot,\cS']_{\can}=\delta_{\cS'}-\Omega_{\can}
\end{gather*}
on $\sP^1$, and then, for every $z_0\in\bP^1$ and every $\cS'\in\sP^1\setminus\{z_0\}$, $\Delta\log[\cdot,\cS']_{z_0}=\delta_{\cS'}-\delta_{z_0}$ on $\sP^1$.
For the details on the construction and properties of $\Delta$, 
see \cite[\S 5]{BR10}, \cite[\S7.7]{FJbook}, \cite[\S2.4]{FR09}, 
\cite[\S 3]{Thuillierthesis}; in \cite{BR10,Thuillierthesis},
the opposite sign convention for $\Delta$ is adopted.

\subsection{Logarithmic potential theory on $\sP^1$}\label{sec:logarithmic}
For every $z\in\sP^1$ and
every positive Radon measure $\nu$ on $\sP^1$ supported by $\sP^1\setminus\{z\}$, 
the {\em logarithmic potential} of $\nu$ on $\sP^1$ with 
pole $z$ is the function
\begin{gather*}
 p_{z,\nu}(\cdot):=\int_{\sP^1}\log[\cdot,\cS']_{z}\nu(\cS')\quad\text{on }\sP^1,
\end{gather*}
and the {\em logarithmic energy} of $\nu$ with pole $z$ is defined by 
\begin{gather*}
 I_{z,\nu}:=\int_{\sP^1}p_{z,\nu}\nu\in[-\infty,+\infty).
\end{gather*}
Then $p_{z,\nu}:\sP^1\to[-\infty,+\infty]$ is upper semicontinuous,
and in fact is {\em strongly} upper semicontinuous in that for every $\cS\in\sP^1$,
\begin{gather}
 \limsup_{\cS'\to\cS}p_{z,\nu}(\cS')=p_{z,\nu}(\cS)\label{eq:susc}
\end{gather}
(\cite[Proposition 6.12]{BR10}).

For every non-empty
subset $C$ in $\sP^1$ and every $z\in\bP^1\setminus C$,
we say $C$ is {\em of logarithmic capacity $>0$ with pole $z$} if
\begin{gather*}
 V_z(C):=\sup_{\nu}I_{z,\nu}>-\infty,
\end{gather*}
where $\nu$ ranges over all probability Radon measures on $\sP^1$
supported by $C$; otherwise, we say $C$ is {\em of logarithmic capacity $0$
with pole $z$}. For every non-empty compact subset $C$ in $\sP^1$
of logarithmic capacity $>0$ with pole $z\in\sP^1\setminus C$, 
there is a {\em unique} probability Radon measure 
$\nu$ on $\sP^1$, which is called 
the {\em equilibrium mass distribution on $C$
with pole} $z$ and is denoted by $\nu_{z,C}$, 
such that $\supp\nu\subset C$ and that $I_{z,\nu}=V_z(C)$, and then
(i) $\nu_{z,C}(E)=0$ for any subset $E$ in $C$ of logarithmic capacity $0$
with pole $z$, 
(ii) 
letting $D_z$ be the component of
$\sP^1\setminus C$ containing $z$, we have 
\begin{gather*}
\supp\nu_{z,C}\subset\partial D_z,\quad
 p_{z,\nu_{z,C}}\ge I_{z,\nu_{z,C}}\text{ on }\sP^1,\quad
 p_{z,\nu_{z,C}}>I_{z,\nu_{z,C}}\text{ on }D_z,\quad\text{and}\\
 p_{z,\nu_{z,C}}\equiv I_{z,\nu_{z,C}}\text{ on }\sP^1\setminus(D_z\cup E),
\end{gather*}
where $E$ is a possibly empty $F_{\sigma}$-subset in $\partial D_z$
of logarithmic capacity $0$ with pole $z$, 
(iii) if in addition $p_{z,\nu_{z,C}}$ is continuous on 
$\sP^1\setminus{\{z\}}$, 
then 
\begin{gather*}
\supp\nu_{z,C}=\partial D_z\quad\text{and}\quad
p_{z,\nu_{z,C}}\equiv I_{z,\nu_{z,C}}\text{ on }\sP^1\setminus D_z,
\end{gather*}
and (iv) for any probability Radon measure $\nu'$ supported by $C$, we have
\begin{gather}
 \inf_{\cS\in C}p_{z,\nu'}\le I_{z,\nu_{z,C}}\le\sup_{\cS\in C}p_{z,\nu'}\label{eq:non-equilibrium}
\end{gather}
(see \cite[\S 6.2, \S 6.3]{BR10}).

We list a few observations.

\begin{observation}\label{th:affine}
 For every $a\in K\setminus\{0\}$ and every $b\in K$, 
 setting $\ell(z):=az+b\in\PGL(2,K)$, 
 we have $\log|\ell(\cS)-\ell(\cS')|_\infty=\log|\cS-\cS'|_\infty+\log|a|$
 on $K\times K$, and in turn on $\sP^1\times\sP^1$. 
 In particular, for every non-empty compact subset $C$ in
 $\sP^1\setminus\{\infty\}$ of logarithmic capacity $>0$ with pole $\infty$,
 we have  $I_{\infty,\nu_{\infty,\ell(C)}}=I_{\infty,\nu_{\infty,C}}+\log|a|$
and $\ell_*(\nu_{\infty,C})=\nu_{\infty,\ell(C)}$ on $\sP^1$.
\end{observation}

\begin{observation}\label{th:involution}
 Since the involution $\iota(z)=1/z\in\PGL(2,\cO_K)$ 
 acts on $(\bP^1,[z,w])$ isometrically,
 for any $z_0\in\bP^1$, we have
 $[\iota(\cS),\iota(\cS')]_{\iota(z_0)}=[\cS,\cS']_{z_0}$ 
 on $\bP^1\times\bP^1$, and in turn on $\sP^1\times\sP^1$.
 Hence for any non-empty compact subset $C$ in $\sP^1$ and 
 any $z\in\sP^1\setminus C$,
 if $C$ is of logarithmic capacity $>0$ with pole $z$, then 
 $V_z(C)=V_{\iota(z)}(\iota(C))$ 
 and $\iota_*(\nu_{z,C})=\nu_{\iota(z),\iota(C)}$ on $\sP^1$.
\end{observation}

\begin{observation}\label{th:perburb}
For every $z\in\bP^1$, 
the strong triangle inequality 
$[\cS,\cS'']_{z}\le\max\{[\cS,\cS']_{z},[\cS',\cS'']_{z}\}$
for $\cS,\cS',\cS''\in\sP^1$ still holds
 (see \cite[Proposition 4.10]{BR10}). Hence
 for every non-empty compact subset $C$ in $\sP^1\setminus\{\infty\}$ and
 every $z\in\sP^1\setminus C$ so close to $\infty$ that
 $[z,\infty]<\inf_{\cS\in C}[\cS,z]_{\can}$, we have
 $[\cdot,\infty]_{\can}=[\cdot,z]_{\can}$ on $C$, 
 which yields $[\cS,\cS']_\infty=[\cS,\cS']_{z}$ on $C\times C$, so
 if in addition $C$ is of logarithmic capacity $>0$ with pole $\infty$, then 
 $V_{\infty}(C)=V_z(C)$ 
 and $\nu_{\infty,C}=\nu_{z,C}$ on $\sP^1$.
\end{observation}

\subsection{Potential theory with a continuous weight on $\sP^1$}
A {\em continuous weight $g$ on} $\sP^1$ is
a continuous function on $\sP^1$ such that 
\begin{gather*}
 \mu^g:=\Delta g+\Omega_{\can}
\end{gather*}
is a probability Radon measure on $\sP^1$. Then
$\mu^g$ has no atoms on $\bP^1$, or more strongly,
$\mu^g(E)=0$ for any subset $E$ in $\sP^1$ of logarithmic capacity $0$
with some (indeed any) point in $\sP^1\setminus E$.

For a continuous weight $g$ on $\sP^1$,
the {\em $g$-potential kernel} on $\sP^1$
(the negative of an Arakelov Green kernel function on $\sP^1$
relative to $\mu^g$ \cite[\S 8.10]{BR10}) is an upper semicontinuous
function
\begin{gather}
 \Phi_g(\cS,\cS'):=\log[\cS,\cS']_{\can}-g(\cS)-g(\cS')\quad\text{on }\sP^1\times\sP^1.\label{eq:weighted}
\end{gather}
For every Radon measure $\nu$ on $\sP^1$,
the {\em $g$-potential} of $\nu$ on $\sP^1$ is the function
\begin{gather*}
 U_{g,\nu}(\cdot):=\int_{\sP^1}\Phi_g(\cdot,\cS')\nu(\cS')\quad\text{on }\sP^1,
\end{gather*}
and the {\em $g$-energy} of $\nu$ is defined by  
\begin{gather*}
 I_{g,\nu}:=\int_{\sP^1}U_{g,\nu}\nu\in[-\infty,+\infty).
\end{gather*}
The {\em $g$-equilibrium energy
$V_g$ of $($the whole$)$ $\sP^1$} is the supremum of the {\em $g$-energy} functional
$\nu\mapsto I_{g,\nu}$,
where $\nu$ ranges over all probability Radon measures on $\sP^1$. Then
$V_g\in\bR$ since $I_{g,\Omega_{\can}}>-\infty$. 
As in the logarithmic potential theory presented in the previous subsection,
there is a unique probability Radon measure $\nu^g$ on 
$\sP^1$, which is called 
the $g$-{\em equilibrium mass distribution on} $\sP^1$, 
such that $I_{g,\nu^g}=V_g$. In fact
\begin{gather*}
 U_{g,\nu^g}\equiv V_g\quad\text{on }\sP^1
\quad\text{and}\quad\nu^g=\mu^g\quad\text{on }\sP^1
\end{gather*}
(see \cite[Theorem 8.67, Proposition 8.70]{BR10}).

A continuous weight $g$ on $\sP^1$ is 
a {\em normalized weight on} $\sP^1$ if
$V_g=0$. For a continuous weight $g$ on $\sP^1$,
$\overline{g}:=g+V_g/2$ is the unique normalized weight on $\sP^1$ 
satisfying $\mu^{\overline{g}}=\mu^g$.

\section{Background from Dynamics on $\sP^1$}\label{sec:dynamics}
Let $K$ be an algebraically closed field that is
complete with respect to a non-trivial and non-archimedean absolute value $|\cdot|$. 
For a potential-theoretic study of dynamics of a rational function
of degree $>1$ on $\sP^1=\sP^1(K)$,
see \cite[\S 10]{BR10}, \cite[\S 3]{FR09}, \cite[\S 5]{Jonsson15},
and \cite[\S 13]{BenedettoBook}.
In the following, we adopt a presentation from \cite[\S 8.1]{OkuDivisor}.

\subsection{Canonical measure and the dynamical Green function of $f$ on $\sP^1$}\label{sec:canonical}
Let $f\in K(z)$ be a rational function of degree $d>1$.
We call $F\in(K[p_0,p_1]_d)^2$ a {\em lift} of $f$ if 
\begin{gather*}
 \pi\circ F=f\circ\pi
\end{gather*}
on $K^2\setminus\{(0,0)\}$, where for each $j\in\bN\cup\{0\}$, $K[p_0,p_1]_j$
is the set of all homogeneous polynomials in $K[p_0,p_1]$ of degree $j$,
as usual. 
A lift $F=(F_0,F_1)$ of $f$ is unique up to multiplication in $K\setminus\{0\}$.
Setting $d_0:=\deg F_0(1,z)$ and $d_1:=\deg F_1(1,z)$
and letting $c^F_0,c^F_1\in K\setminus\{0\}$ be the
coefficients of the maximal degree terms of $F_0(1,z),F_1(1,z)\in K[z]$,
respectively, the {\em homogeneous} resultant
\begin{gather*}
 \Res F=
(c^F_0)^{d-d_1}\cdot(c^F_1)^{d-d_0}\cdot R\bigl(F_0(1,\cdot),F_1(1,\cdot)\bigr)\in K
\end{gather*}
of $F$ does not vanish,
where $R(P,Q)\in K$ is the usual resultant of $(P,Q)\in (K[z])^2$
(for the details on $\Res F$, see e.g.\ \cite[\S2.4]{SilvermanDynamics}).

Let $F$ be a lift of $f$, and for every $n\in\bN\cup\{0\}$, 
set $F^n=F\circ F^{n-1}$ where $F^0:=\Id_{K^2}$. Then for every $n\in\bN$, 
$F^n$ is a lift of $f^n$, 
and the function 
\begin{gather*}
 T_{F^n}:=\log\|F^n\|-d^n\cdot\log\|\cdot\|
\end{gather*}
on $K^2\setminus\{(0,0)\}$ descends to $\bP^1$ and in turn extends continuously to
$\sP^1$,
satisfying the equality $\Delta T_{F^n}=(f^n)^*\Omega_{\can}-d^n\cdot\Omega_{\can}$
on $\sP^1$ (see, e.g., \cite[Definition 2.8]{Okucharacterization}). 
The dynamical Green function of $F$ on $\sP^1$ is the uniform limit 
$g_F:=\lim_{n\to\infty}T_{F^n}/d^n$ on $\sP^1$,
which is a continuous weight on $\sP^1$.
The {\em energy} formula 
\begin{gather*}
 V_{g_F}=-\frac{\log|\Res F|}{d(d-1)}
\end{gather*}
is due to DeMarco \cite{DeMarco03}
for archimedean $K$ by a dynamical argument,
and due to Baker--Rumely \cite{BR06} when $f$ is defined over a number field;
see Baker \cite[Appendix A]{Baker09} or
the present authors \cite[Appendix]{OS11} for a simple and potential-theoretic
proof of this remarkable formula, for general $K$.
The $f$-{\em canonical measure} is the probability Radon measure
\begin{gather*}
 \mu_f:=\Delta g_F+\Omega_{\can}\quad\text{on }\sP^1.
\end{gather*}
The measure $\mu_f$ is independent of the choice of the lift $F$ of $f$,
has no atoms in $\bP^1$,
and satisfies the $f$-balanced property $f^*\mu_f=d\cdot\mu_f$ 
(so in particular $f_*\mu_f=\mu_f$) on $\sP^1$. For more details,
see \cite[\S10]{BR10}, \cite[\S2]{ChambertLoir06}, \cite[\S3.1]{FR09}.

The {\em dynamical Green function $g_f$ of $f$ on} $\sP^1$ is 
the unique normalized weight on $\sP^1$ such that $\mu^{g_f}=\mu_f$.
By the above energy formula on $V_{g_F}$ and 
\begin{gather*}
 \Res(cF)=c^{2d}\cdot \Res F\quad\text{for every }c\in K\setminus\{0\},
\end{gather*}
there is a lift $F$ of $f$ normalized so that $V_{g_F}=0$
or equivalently that $g_F=g_f$ on $\sP^1$, and
such a {\em normalized lift} $F$ of $f$
is unique up to multiplication in $\{z\in K:|z|=1\}$.
By $g_f=g_F=\lim_{n\to\infty}T_{F^n}/d^n$ on $\sP^1$
for a normalized lift $F$ of $f$,
for every $n\in\bN$, we have
$g_{F^n}=g_{f^n}=g_f$ on $\sP^1$ and  $\mu_{f^n}=\mu_f$ on $\sP^1$.
We note that
$g_f\circ f=d\cdot\lim_{n\to\infty}T_{F^{n+1}}/d^{n+1}-T_F=d\cdot g_f-T_F$
on $\bP^1$, that is,
\begin{gather}
 d\cdot g_f-g_f\circ f=T_F\label{eq:invarianceGreen}
\end{gather}
on $\bP^1$, and in turn on $\sP^1$ by the density of $\bP^1$ in $\sP^1$
and the continuity of both sides on $\sP^1$
(cf.\ \cite[Proof of Lemma 2.4]{OkuLyap}).

\subsection{Fundamental properties of $\mu_f$}\label{sec:support}
Recall the definition of $\sJ(f)$ in Section \ref{sec:intro}.
The characterization of $\mu_f$ as the unique probability Radon measure 
$\nu$ on $\sP^1$ such that $\nu(E(f))=0$ and that
$f^*\nu=d\cdot \nu$ on $\sP^1$ 
is a consequence of the following equidistribution theorem;
{\em for every probability Radon measure $\mu$ on $\sP^1$, if $\mu(E(f))=0$, then}
\begin{gather}
 \lim_{n\to\infty}\frac{(f^n)^*\mu}{d^n}=\mu_f\quad \text{\em weakly on }\sP^1.\label{eq:equidist}
\end{gather}
This foundational result is due to Favre--Rivera-Letelier \cite{FR09} 
(for a purely potential-theoretic proof, see also Jonsson \cite{Jonsson15})
and is a non-archimedean counterpart to
Brolin \cite{Brolin}, Lyubich \cite{Lyubich83},
Freire--Lopes--Ma\~n\'e \cite{FLM83}. 

\begin{remark}\label{th:exceptional}
The {\em classical Julia set} $\sJ(f)\cap\bP^1$
of $f$ coincides with the set of all points in $\bP^1$ at each of which the family
$\bigl(f^n:(\bP^1,[z,w])\to(\bP^1,[z,w])\bigr)_{n\in\bN}$ is not locally 
equicontinuous (see, e.g.,\ \cite[Theorem 10.67]{BR10}).
\end{remark}

The equality $\supp\mu_f=\sJ(f)$ holds; the inclusion $\sJ(f)\subset\supp\mu_f$
follows from the definition of $\sJ(f)$,
the balanced property $f^*\mu_f=d\cdot\mu_f$ on $\sP^1$, and 
$\supp\mu_f\not\subset E(f)$ (or more precisely,
recalling that $E(f)$ is an at most countable subset
in $\bP^1$ and that $\mu_f$ has no atoms in $\bP^1$). 
The opposite inclusion $\supp\mu_f\subset\sJ(f)$ follows from the definition of $\sJ(f)$ and 
the above equidistribution theorem.
\begin{remark}[see e.g.\ {\cite[Corollary 10.33]{BR10}}]\label{th:potgood}
 If $\mu_f$ has an atom in $\sP^1$, then $f$ has a potentially good reduction,
 so in particular $\sJ(f)$ is a singleton in $\sH^1$.
\end{remark}

For every $n\in\bN$, by $\supp\mu_f=\sJ(f)$ and $\mu_{f^n}=\mu_f$ on $\sP^1$, 
we also have $\sJ(f^n)=\sJ(f)$.
For every $m\in\PGL(2,K)$, we have
$m_*\mu_f=\mu_{m\circ f\circ m^{-1}}$ on $\sP^1$, 
$m(\sJ(f))=\sJ(m\circ f\circ m^{-1})$, and $m(\sF(f))=\sF(m\circ f\circ m^{-1})$.

\subsection{Root divisors on $\bP^1$
and the proximity functions on $\sP^1$}\label{sec:divisor}
For any distinct $h_1,h_2\in K(z)$,
let $[h_1=h_2]$ be the effective ($K$-)divisor on $\bP^1$
defined by all solutions to the equation $h_1=h_2$ in $\bP^1$
taking into account their multiplicities, which is also regarded as
the Radon measure 
\begin{gather*}
 \sum_{w\in\bP^1}(\ord_w[h_1=h_2])\cdot\delta_w
\end{gather*}
on $\sP^1$.
The function 
$\bP^1\ni z\mapsto[h_1(z),h_2(z)]$ between $h_1$ and $h_2$ uniquely extends to
a continuous function $\cS\mapsto[h_1,h_2]_{\can}(\cS)$ on $\sP^1$
(see, e.g., \cite[Proposition 2.9]{Okucharacterization}), so that
for every continuous weight $g$ on $\sP^1$, (the $\exp$ of) the function
\begin{gather}
 \Phi(h_1,h_2)_g(\cS):=\log[h_1,h_2]_{\can}(\cS)-g(h_1(\cS))-g(h_2(\cS))
\quad\text{on }\sP^1\label{eq:proximityweighted}
\end{gather}
is a unique continuous extension of (the $\exp$ of) the 
function $\bP^1\ni z\mapsto\Phi_g(h_1(z),h_2(z))$.

\section{Potential-theoretic computations}
\label{sec:computation}
Let $K$ be an algebraically closed field that is
complete with respect to a non-trivial and non-archimedean
absolute value $|\cdot|$.
Let $f\in K(z)$ be a rational function of degree $d>1$.

\begin{lemma}[Riesz's decomposition for the pullback of an atom]
 For every $\cS\in\sP^1$,
 \begin{gather}
  \Phi_{g_f}(f(\cdot),\cS)=U_{g_f,f^*\delta_\cS}(\cdot)
  \quad\text{on }\sP^1.\label{eq:Riesz}
 \end{gather}
\end{lemma}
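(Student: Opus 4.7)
My plan is to establish \eqref{eq:Riesz} distributionally in the free variable $\cS_1 := \cdot$: I will compute the Laplacian $\Delta_{\cS_1}$ of each side, verify that the two agree on $\sP^1$, and determine the resulting additive constant by integration against the $f$-invariant measure $\mu_f$.

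For the left-hand side, I expand $\Phi_{g_f}(f(\cS_1),\cS) = \log[f(\cS_1),\cS]_{\can} - g_f(f(\cS_1)) - g_f(\cS)$ and treat the first two terms separately. Using the functional equation \eqref{eq:invarianceGreen}, i.e.\ $g_f\circ f = d\,g_f - T_F$ for a normalized lift $F$, together with $\Delta g_f = \mu_f - \Omega_{\can}$ and $\Delta T_F = f^*\Omega_{\can} - d\,\Omega_{\can}$, I will obtain $\Delta_{\cS_1}(g_f\circ f) = d\,\mu_f - f^*\Omega_{\can}$. Applying the Poincar\'e--Lelong-type identity to the homogeneous polynomial $p\mapsto F(p)\wedge q$ (of $p$-degree $d$, whose $\bP^1$-divisor of zeros is $f^*\delta_\cS$) yields $\Delta_{\cS_1}\log[f(\cS_1),\cS]_{\can} = f^*\delta_\cS - f^*\Omega_{\can}$, so subtracting gives $\Delta_{\cS_1}\bigl[\Phi_{g_f}(f(\cS_1),\cS)\bigr] = f^*\delta_\cS - d\,\mu_f$. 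For the right-hand side, since $f^*\delta_\cS$ is a finite positive atomic measure of total mass $d$, I differentiate under the sum using $\Delta_{\cS_1}\Phi_{g_f}(\cS_1,\cS') = \delta_{\cS'} - \mu_f$ to obtain the same value $f^*\delta_\cS - d\,\mu_f$.

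Since $\sP^1$ is connected, the two sides then differ by a single additive constant $C$, which I pin down by integration against $\rd\mu_f(\cS_1)$. For the left-hand side, the $f$-invariance $f_*\mu_f = \mu_f$ and the symmetry of $\Phi_{g_f}$ yield
\[
 \int_{\sP^1}\Phi_{g_f}(f(\cS_1),\cS)\,\rd\mu_f(\cS_1) = U_{g_f,\mu_f}(\cS) = V_{g_f} = 0,
\]
where the last equality holds because $g_f$ is normalized and $\mu_f = \nu^{g_f}$. For the right-hand side, Fubini together with the same identity $U_{g_f,\mu_f}\equiv 0$ gives $\int_{\sP^1}U_{g_f,f^*\delta_\cS}\,\rd\mu_f = \int_{\sP^1}U_{g_f,\mu_f}\,\rd(f^*\delta_\cS) = 0$, whence $C = 0$. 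The most delicate step will be justifying the Laplacian identity for $\log[f(\cdot),\cS]_{\can}$ in the Berkovich setting, especially when $\cS \in \sH^1$ so that no literal lift of $\cS$ exists; however this follows from the distributional framework of \cite{BR10} together with the separate continuity of the Hsia kernel in $\cS$.
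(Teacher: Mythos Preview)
Your argument is correct and takes a genuinely different route from the paper's. The paper fixes $w\in\bP^1$, factors the homogeneous polynomial $F(p)\wedge W=\prod_{j=1}^d(p\wedge q^{(j)})$ explicitly, and combines this with \eqref{eq:invarianceGreen} to see by hand that $\Phi_{g_f}(f(\cdot),w)-U_{g_f,f^*\delta_w}$ is constant on $\bP^1$ (then on $\sP^1$ by density); the case $\cS\in\sH^1$ is then obtained by approximating $\cS$ by classical points $w_n$ and passing to the limit using weak convergence of $f^*\delta_{w_n}$ and continuity of $\Phi_{g_f}(\cS',\cdot)$ for $\cS'\in\sH^1$. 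You instead appeal once to the pullback compatibility $\Delta(h\circ f)=f^*\Delta h$ from \cite{BR10} to get $\Delta_{\cS_1}\log[f(\cS_1),\cS]_{\can}=f^*\delta_\cS-f^*\Omega_{\can}$ uniformly in $\cS$, and then match Laplacians. Both proofs pin down the additive constant by the same integration against $\mu_f$. Your approach is more conceptual and treats all $\cS$ at once, at the cost of invoking a general Berkovich-Laplacian pullback theorem; the paper's is more self-contained and elementary, trading that citation for the explicit factorization and a separate limiting step for $\cS\in\sH^1$. Note that your ``Poincar\'e--Lelong'' phrasing via a lift $q$ of $\cS$ only literally makes sense for $\cS\in\bP^1$; for $\cS\in\sH^1$ you are really using the pullback formula directly (as you acknowledge), and that is what replaces the paper's approximation argument.
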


\begin{proof}
Fix a lift $F$ of $f$ normalized so that $g_F=g_f$ on $\sP^1$.
Fix $w\in\bP^1$ and $W\in\pi^{-1}(w)$.
Choose a sequence $(q_j)_{j=1}^{d}$ 
in $K^2\setminus\{(0,0)\}$ such that $F(p_0,p_1)\wedge W\in K[p_0,p_1]_d$ factors as
$F(p_0,p_1)\wedge W=\prod_{j=1}^{d}((p_0,p_1)\wedge q_j)$ in $K[p_0,p_1]$,
which with \eqref{eq:invarianceGreen} implies
\begin{multline*}
 \Phi_{g_f}(f(\cdot),w)-U_{g_f,f^*\delta_w}(\cdot)\\
 \equiv -(g_f(w)+\log\|W\|)
 +\sum_{j=1}^{d}(g_f(\pi(q_j))+\log\|q_j\|)=:C
\end{multline*}
on $\bP^1$, and in turn on $\sP^1$ by
the density of $\bP^1$ in $\sP^1$ and
the continuity of (the $\exp$ of)
both sides on $\sP^1$.
Integrating both sides against $\mu_f$ over $\sP^1$,
since
$\int_{\sP^1}U_{g_f,f^*\delta_w}\mu_f
=\int_{\sP^1}U_{g_f,\mu_f}(f^*\delta_w)=0$ 
(by $U_{g_f,\mu_f}\equiv 0$) and $f_*\mu_f=\mu_f$, we have 
\begin{gather*}
 C=\int_{\sP^1}\Phi_{g_f}(f(\cdot),w)\mu_f
% =\int_{\sP^1}\Phi_{g_f}(w,f(\cdot))\mu_f
 =U_{g_f,f_*\mu_f}(w)=U_{g_f,\mu_f}(w)=0.
\end{gather*}
This completes the proof
of \eqref{eq:Riesz} in the case $\cS=w\in\bP^1$.

Fix $\cS\in\sH^1$. By the density of $\bP^1$ in $\sP^1$,
we can choose
%there is 
a sequence 
$(w_n)$ in $\bP^1$ tending to $\cS$ as $n\to\infty$.
Then $\lim_{n\to\infty}f^*\delta_{w_n}=f^*\delta_\cS$ weakly on $\sP^1$ and,
for every $n\in\bN$, applying \eqref{eq:Riesz} to $\cS=w_n\in\bP^1$,
we have $\Phi_{g_f}(f(\cdot),w_n)=U_{g_f,f^*\delta_{w_n}}(\cdot)$
on $\sP^1$. Hence, for each $\cS'\in\sH^1$, by the continuity of
both $\Phi_{g_f}(f(\cS'),\cdot)$ and
$\Phi_{g_f}(\cS',\cdot)$ on $\sP^1$, we have
\begin{gather*}
 \Phi_{g_f}(f(\cS'),\cS)
=\lim_{n\to\infty}\Phi_{g_f}(f(\cS'),w_n)
=\lim_{n\to\infty}U_{g_f,f^*\delta_{w_n}}(\cS')
=U_{g_f,f^*\delta_{\cS}}(\cS').
\end{gather*}
This completes the proof of \eqref{eq:Riesz}
by the density of $\sH^1$ in $\sP^1$
and the continuity of (the $\exp$ of) 
both $\Phi_{g_f}(f(\cdot),\cS)$ and $U_{g_f,f^*\delta_{\cS}}(\cdot)$ on $\sP^1$.
\end{proof}

The following computation 
is an application of Lemma \ref{eq:Riesz}.
We include a proof of it although it will not be used in this article. 

\begin{lemma}[Riesz's decomposition for the fixed points divisor on $\bP^1$]\label{th:proximity}
\begin{gather}
\Phi(f,\Id_{\bP^1})_{g_f}=U_{g_f,[f=\Id_{\bP^1}]}\quad\text{on }\sP^1.\label{eq:Rieszfixed}
 \end{gather}
\end{lemma}

\begin{proof}
Fix a lift $F$ of $f$ normalized so that $g_F=g_f$ on $\sP^1$.
Choose a sequence $(q_j)_{j=1}^{d+1}$ in $K^2\setminus\{(0,0)\}$ so that 
$(F\wedge\Id_{\bP^1})(p_0,p_1)\in K[p_0,p_1]_{d+1}$ factors as
$(F\wedge\Id_{\bP^1})(p_0,p_1)=\prod_{j=1}^{d+1}((p_0,p_1)\wedge q_j)$ in $K[p_0,p_1]$, 
which with \eqref{eq:invarianceGreen} implies
\begin{gather*}
 \Phi(f,\Id_{\bP^1})_{g_f}-U_{g_f,[f=\Id_{\bP^1}]}\equiv
 \sum_{j=1}^{d+1}(g_f(\pi(q_j))+\log\|q_j\|)=:C
\end{gather*}
on $\bP^1$, and in turn on $\sP^1$ by
the density of $\bP^1$ in $\sP^1$ and
the continuity of (the $\exp$ of) both sides on $\sP^1$.
Integrating both sides against $\mu_f$ over $\sP^1$, 
since
$\int_{\sP^1}U_{g_f,[f=\Id_{\bP^1}]}\mu_f=\int_{\sP^1}U_{g_f,\mu_f}[f=\Id_{\bP^1}]=0$
(by $U_{g_f,\mu_f}\equiv 0$),
%on $\sP^1$, 
we have
$C=\int_{\sP^1}\Phi(f,\Id_{\bP^1})_{g_f}\mu_f$, so that
we first have 
\begin{gather*}
 \Phi(f,\Id_{\bP^1})_{g_f}
=U_{g_f,[f=\Id_{\bP^1}]}+\int_{\sP^1}\Phi(f,\Id_{\bP^1})_{g_f}\mu_f\quad\text{on }\sP^1.
\end{gather*}

Fix $z_0\in\bP^1\setminus(\supp[f=\Id_{\bP^1}])$. Using
the above equality twice,
by $f_*[f=\Id_{\bP^1}]=[f=\Id_{\bP^1}]$ on $\sP^1$ and \eqref{eq:Riesz}, we have
\begin{align*}
 &\Phi_{g_f}(f(z_0),z_0)-\int_{\sP^1}\Phi(f,\Id_{\bP^1})_{g_f}\mu_f\\
 =&U_{g_f,[f=\Id_{\bP^1}]}(z_0)
 =U_{g_f,f_*[f=\Id_{\bP^1}]}(z_0)
=\int_{\sP^1}\Phi_{g_f}(z_0,\cdot)(f_*[f=\Id_{\bP^1}])(\cdot)\\
=&\int_{\sP^1}\Phi_{g_f}(z_0,f(\cdot))[f=\Id_{\bP^1}](\cdot)
=\int_{\sP^1}U_{g_f,f^*\delta_{z_0}}[f=\Id_{\bP^1}]\\
=&\int_{\sP^1}U_{g_f,[f=\Id_{\bP^1}]}(f^*\delta_{z_0})
=\int_{\sP^1}\Bigl(\Phi(f,\Id_{\bP^1})_{g_f}
-\int_{\sP^1}\Phi(f,\Id_{\bP^1})_{g_f}\mu_f\Bigr)(f^*\delta_{z_0})\\
=&\int_{\sP^1}\Phi(f,\Id_{\bP^1})_{g_f}(f^*\delta_{z_0})
-d\cdot\int_{\sP^1}\Phi(f,\Id_{\bP^1})_{g_f}\mu_f,
\end{align*}  
and moreover, $\int_{\sP^1}\Phi(f,\Id_{\bP^1})_{g_f}(f^*\delta_{z_0})
 % =\int_{\sP^1}\Phi_{g_f}(z_0,\cdot)(f^*\delta_{z_0})(\cdot)
 =U_{g_f,f^*\delta_{z_0}}(z_0)=\Phi_{g_f}(f(z_0),z_0)$ by \eqref{eq:Riesz}.
 Hence $(d-1)\int_{\sP^1}\Phi(f,\Id_{\bP^1})_{g_f}\mu_f=0$, and 
in turn since $d>1$, 
\begin{gather}
 \int_{\sP^1}\Phi(f,\Id_{\bP^1})_{g_f}\mu_f=0.\label{eq:vanish}
\end{gather}
 This completes the proof.
\end{proof}

From now on, we focus on the case where $\infty\in\sF(f)$.
We adopt the following convention when no confusion would be caused.
\begin{convention}
 For every probability Radon measure $\nu$ supported by
 $\sP^1\setminus\{\infty\}$,
 we denote $p_{\infty,\nu}$ and $I_{\infty,\nu}$
 by $p_\nu$ and $I_\nu$, respectively,
 for simplicity.
\end{convention}

Since $\supp\mu_f=\sJ(f)\subset\sP^1\setminus D_\infty$,
the equality \eqref{eq:energyinfty} below implies that
$\sP^1\setminus D_\infty$ is of logarithmic capacity $>0$ with pole $\infty$.

\begin{lemma}
\label{th:nonpolar}
Suppose that $\infty\in\sF(f)$. Then
\begin{gather}
 p_{\mu_f}=g_f-\log[\cdot,\infty]_{\can}+\frac{I_{\mu_f}}{2}\quad\text{on }\sP^1,
 \label{eq:continuity}\\
 I_{\mu_f}=-2\cdot g_f(\infty)>-\infty,\text{ and}
\label{eq:energyinfty}\\
 \Phi_{g_f}(\cdot,\infty)
 =-p_{\mu_f}+I_{\mu_f}\quad\text{on }\sP^1.\label{eq:infty}
\end{gather} 
\end{lemma}

\begin{proof}
Suppose $\infty\in\sF(f)$. Then we have
$\supp\mu_f=\sJ(f)\subset\sP^1\setminus D_\infty$ and
\begin{gather*}
 0=V_{g_f}=\int_{\sP^1\times\sP^1}\Phi_{g_f}(\mu_f\times\mu_f)
=I_{\mu_f}-2\cdot\int_{\sP^1}(g_f-\log[\cdot,\infty]_{\can})\mu_f,
\end{gather*}
so that
$I_{\mu_f}=2\cdot\int_{\sP^1}(g_f-\log[\cdot,\infty]_{\can})\mu_f$,
which with 
\begin{gather*}
 0\equiv U_{g_f,\mu_f}=p_{\mu_f}-(g_f-\log[\cdot,\infty]_{\can})-\int_{\sP^1}(g_f-\log[\cdot,\infty]_{\can})\mu_f\quad\text{on }\sP^1 
\end{gather*}
yields \eqref{eq:continuity}.
By \eqref{eq:continuity} and 
$\log[z,\infty]=\log[z,0]-\log|z|$ on $\bP^1\setminus\{\infty\}$,
we have 
\begin{gather*}
 g_f(\infty)=\lim_{z\to\infty}\bigl((p_{\mu_f}(z)-\log|z|)+\log[z,0]\bigr)
-\frac{I_{\mu_f}}{2}=-\frac{I_{\mu_f}}{2}, 
\end{gather*}
so that \eqref{eq:energyinfty} holds.
By \eqref{eq:continuity} and \eqref{eq:energyinfty}, we have
$\Phi_{g_f}(\cdot,\infty)=\log[\cdot,\infty]_{\can}-g_f-g_f(\infty)
=(-p_{\mu_f}+I_{\mu_f}/2)+I_{\mu_f}/2=-p_{\mu_f}+I_{\mu_f}$ on $\sP^1$,
so \eqref{eq:infty} also holds.
\end{proof}

Let $F=(F_0,F_1)\in(K[p_0,p_1]_d)^2$ be a normalized lift of $f$,
and $c_0^F,c_1^F\in K\setminus\{0\}$ be the
coefficients of the maximal degree terms of
$F_0(1,z),F_1(1,z)\in K[z]$, respectively.
No matter whether $\infty\in\sF(f)$,
by the equality $[z,\infty]=1/\|(1,z)\|$ on $\bP^1$ and
the definition of $T_F$, we have
\begin{gather*}
 T_F=-\log[f(\cdot),\infty]_{\can}
+\log|F_0(1,\cdot)|_\infty+d\cdot\log[\cdot,\infty]_{\can}
\end{gather*}
on $\bP^1\setminus(\{\infty\}\cup f^{-1}(\infty))$,
and in turn on $\sP^1\setminus(\{\infty\}\cup f^{-1}(\infty))$
by the density of $\bP^1$ in $\sP^1$ and the continuity of both sides
on $\sP^1\setminus(\{\infty\}\cup f^{-1}(\infty))$.
By \eqref{eq:invarianceGreen}, this equality is rewritten as
\begin{gather}
 d\cdot(g_f-\log[\cdot,\infty]_{\can})-(g_f\circ f-\log[f(\cdot),\infty]_{\can})
 =\log|F_0(1,\cdot)|_{\infty}\label{eq:prepullback}
\end{gather}
on $\sP^1\setminus(\{\infty\}\cup f^{-1}(\infty))$.

\begin{lemma}[Pullback formula for $p_{\mu_f}$ under $f$]
%\label{th:invariance} 
If $\infty\in\sF(f)$, then 
\begin{gather}
 \log|F_0(1,\cdot)|_\infty
 =d\cdot p_{\mu_f}-p_{\mu_f}\circ f-(d-1)\frac{I_{\mu_f}}{2}
\label{eq:invariance}
\end{gather}
on $\sP^1\setminus(\{\infty\}\cup f^{-1}(\infty));$
moreover,
for every $\cS'\in\sP^1\setminus\{\infty,f(\infty)\}$, 
\begin{multline}
p_{\mu_f}(\cS')-\int_{\sP^1\setminus\{\infty\}}p_{\mu_f}(f^*\delta_{\cS'})
+(d-1)I_{\mu_f}\\
=
-\int_{\sP^1}\log|F_0(1,\cdot)|_\infty\frac{f^*\delta_{\cS'}}{d}+(d-1)\frac{I_{\mu_f}}{2},\label{eq:pullbackgeneral}
\end{multline}
and similarly
\begin{gather}
 \int_{\sP^1\setminus\{\infty\}}p_{\mu_f}(f^*\delta_\infty)-(d-1)I_{\mu_f}
=-\log|c_0^F|-(d-1)\frac{I_{\mu_f}}{2}.\label{eq:coeffdenomgeneral}
\end{gather}
\end{lemma}

\begin{proof}
Suppose $\infty\in\sF(f)$. 
Then for every $\cS'\in\sP^1\setminus\{\infty,f(\infty)\}$, 
by \eqref{eq:prepullback} and \eqref{eq:continuity}, 
we have \eqref{eq:invariance}.
Integrating both sides in
\eqref{eq:invariance} against $f^*\delta_{\cS'}/d$
over $\sP^1$, we have \eqref{eq:pullbackgeneral}. Similarly,
integrating both sides in \eqref{eq:invariance} against $\mu_f$ 
over $\sP^1$, also by $f_*\mu_f=\mu_f$
and $I_{\mu_f}:=\int_{\sP^1}p_{\mu_f}\mu_f$, we have
\begin{multline*}
 \log|c_0^F|+\int_{\sP^1\setminus\{\infty\}}p_{\mu_f}(f^*\delta_\infty)
=\int_{\sP^1}\log|F_0(1,\cdot)|_\infty\mu_f\\
=d\cdot I_{\mu_f}-\int_{\sP^1}(p_{\mu_f}\circ f)\mu_f-(d-1)\frac{I_{\mu_f}}{2}
=(d-1)\frac{I_{\mu_f}}{2},
\end{multline*}
%where the final equality is 
so \eqref{eq:coeffdenomgeneral} also holds.
\end{proof}

If $f(\infty)=\infty$, then $F(0,1)=(0,c^F_1)$, so that by the homogeneity of $F$,
for every $n\in\bN$, $F^n(0,1)=(0,(c^F_1)^{(d^n-1)/(d-1)})$ and that
\begin{gather*}
 g_f(\infty)=\lim_{n\to\infty}\frac{T_{F^n}(\infty)}{d^n}
 =\lim_{n\to\infty}\frac{\log\|F^n(0,1)\|}{d^n}-\log\|(0,1)\|
 =\frac{\log|c^F_1|}{d-1}.
\end{gather*}

\begin{lemma}
If $f(\infty)=\infty\in\sF(f)$, then 
\begin{gather}
 I_{\mu_f}=-\frac{2}{d-1}\log|c^F_1|\label{eq:coeffnum} 
\end{gather}
and, for every $\cS'\in\sP^1$,
\begin{gather}
 \int_{\sP^1\setminus\{\infty\}}p_{\mu_f}(f^*\delta_{\cS'})-(d-1)I_{\mu_f}
=\begin{cases}
 p_{\mu_f}(\cS') & \text{if }\cS'\neq\infty,\\
\displaystyle \log\biggl|\frac{c_1^F}{c_0^F}\biggr| & \text{if }\cS'=\infty.
\end{cases}\label{eq:coeffdenomfixed}
\end{gather}
\end{lemma}

\begin{proof}
Suppose that $f(\infty)=\infty\in\sF(f)$. Then
by the above computation of $g_f(\infty)$
and \eqref{eq:energyinfty}, we have \eqref{eq:coeffnum}. 
Moreover, for every $\cS'\in\sP^1\setminus\{\infty\}$, 
using \eqref{eq:infty} twice and \eqref{eq:Riesz}
(and the assumption $f(\infty)=\infty$), we compute
\begin{multline*}
-p_{\mu_f}(\cS')+I_{\mu_f}=\Phi_{g_f}(\infty,\cS')=
 \Phi_{g_f}(f(\infty),\cS')\\
 =\int_{\sP^1}\Phi_{g_f}(\infty,\cdot)(f^*\delta_{\cS'})
 =-\int_{\sP^1}p_{\mu_f}(f^*\delta_{\cS'})+d\cdot I_{\mu_f},
\end{multline*}
so \eqref{eq:coeffdenomfixed} holds for $\cS'\in\sP^1\setminus\{\infty\}$.
Finally, \eqref{eq:coeffdenomfixed} for $\cS'=\infty$
holds by \eqref{eq:coeffdenomgeneral} and \eqref{eq:coeffnum}.
\end{proof}

Let us now focus on $\nu_\infty=\nu_{\infty,\sP^1\setminus D_{\infty}}$
when $\infty\in\sF(f)$.
Then $f(\infty)\in\sF(f)$ and,
since $\supp\nu_\infty\subset\partial D_\infty\subset\sJ(f)=\supp\mu_f$, 
we have 
\begin{gather*}
 \supp(f_*\nu_\infty)\subset f(\sJ(f))=\sJ(f)=\supp\mu_f\subset\sP^1\setminus D_\infty.
\end{gather*}

\begin{lemma}
Suppose that $\infty\in\sF(f)$. 
Then for every $\cS'\in\sP^1\setminus\{\infty,f(\infty)\}$, 
\begin{multline}
 p_{f_*\nu_\infty}(\cS')
 -\int_{\sP^1}p_{\nu_\infty}(f^*\delta_{\cS'})
+d\cdot I_{\nu_\infty}-\int_{\sP^1}(p_{f_*\nu_\infty})\mu_f\\
=p_{\mu_f}(\cS')-\int_{\sP^1}p_{\mu_f}(f^*\delta_{\cS'})+(d-1)I_{\mu_f}
\label{eq:potentialpushforward}
\end{multline}
and, if in addition $\nu_\infty$ is invariant under $f$ in that
$f_*\nu_\infty=\nu_\infty$ on $\sP^1$, then
\begin{multline}
 p_{\nu_\infty}(\cS')-\int_{\sP^1}p_{\nu_\infty}(f^*\delta_{\cS'})
+(d-1)\cdot I_{\nu_\infty}\\
 =p_{\mu_f}(\cS')-\int_{\sP^1}p_{\mu_f}(f^*\delta_{\cS'})+(d-1)I_{\mu_f}.\label{eq:potentialpushforwardinv}
\end{multline}
\end{lemma}

\begin{proof}
Suppose that $\infty\in\sF(f)$. 
Then for every $\cS'\in\sP^1\setminus\{\infty,f(\infty)\}$,
using $\eqref{eq:continuity}$ repeatedly and \eqref{eq:Riesz}, we have
\begin{align*}
&p_{f_*\nu_\infty}(\cS')
=\int_{\sP^1}\log|\cS'-\cdot|_\infty(f_*\nu_\infty)
=\int_{\sP^1}\log|\cS'-f(\cdot)|_\infty\nu_\infty\\
=&\int_{\sP^1}\Bigl(\Phi_{g_f}(f(\cdot),\cS')+\bigl(p_{\mu_f}(f(\cdot))-\frac{I_{\mu_f}}{2}\bigr)+\bigl(p_{\mu_f}(\cS')-\frac{I_{\mu_f}}{2}\bigr)\Bigr)\nu_\infty\\
=&\int_{\sP^1}\Bigl(\int_{\sP^1}\Phi_{g_f}(\cdot,\cS)(f^*\delta_{\cS'})(\cS)\Bigr)\nu_\infty
+\int_{\sP^1}(p_{\mu_f}\circ f)\nu_\infty+p_{\mu_f}(\cS')-I_{\mu_f}\\
=&\int_{\sP^1}\biggl(\int_{\sP^1}\Bigl(\log|\cS-\cdot|_\infty-\bigl(p_{\mu_f}(\cS)-\frac{I_{\mu_f}}{2}\bigr)-\bigl(p_{\mu_f}(\cdot)-\frac{I_{\mu_f}}{2}\bigr)\Bigr)(f^*\delta_{\cS'})(\cS)\biggr)\nu_\infty\\
&\quad+\int_{\sP^1}(p_{\mu_f}\circ f)\nu_\infty+p_{\mu_f}(\cS')-I_{\mu_f}\\
=&\int_{\sP^1}p_{\nu_\infty}(f^*\delta_{\cS'})
+\int_{\sP^1}(p_{\mu_f}\circ f-d\cdot p_{\mu_f})\nu_\infty\\
&\quad+p_{\mu_f}(\cS')-\int_{\sP^1}p_{\mu_f}(f^*\delta_{\cS'})+(d-1)I_{\mu_f}.
\end{align*}
Moreover, by Fubini's theorem and $p_{\nu_\infty}\equiv I_{\nu_\infty}$
on $\sP^1\setminus D_\infty$, we also have 
\begin{multline*}
\int_{\sP^1}(p_{\mu_f}\circ f-d\cdot p_{\mu_f})\nu_\infty\\
=\int_{\sP^1}p_{\mu_f}(f_*\nu_\infty)-d\cdot\int_{\sP^1}p_{\mu_f}\nu_\infty
=\int_{\sP^1}(p_{f_*\nu_\infty})\mu_f-d\cdot I_{\nu_\infty},
\end{multline*}
which completes the proof of \eqref{eq:potentialpushforward}.

If in addition $f_*\nu_\infty=\nu_\infty$ on $\sP^1$, then
by the identity $p_{\nu_\infty}\equiv I_{\nu_\infty}$
on $\sP^1\setminus(D_\infty\cup E)$, where $E$ is an $F_\sigma$-subset
in $\partial D_\infty$ of logarithmic capacity $0$ with pole
$\infty$, and by the vanishing $\mu_f(E)=0$, we also have
\begin{gather}
 \int_{\sP^1}(p_{f_*\nu_\infty})\mu_f=
\int_{\sP^1}(p_{\nu_\infty})\mu_f=I_{\nu_\infty},\label{eq:Fubinienergy}
\end{gather}
which completes the proof of \eqref{eq:potentialpushforwardinv}.
\end{proof}

\begin{lemma}[Invariance of $\nu_\infty$ under $f$]\label{th:invharmonic}
If $f(\infty)=\infty\in\sF(f)$, then $f_*\nu_\infty=\nu_\infty$ on $\sP^1$
and, for every $\cS'\in\sP^1$,
\begin{gather}
\int_{\sP^1\setminus\{\infty\}}p_{\nu_\infty}(f^*\delta_{\cS'})-(d-1)I_{\nu_\infty}
=\begin{cases}
  p_{\nu_\infty}(\cS') & \text{if }\cS'\neq\infty,\\
\displaystyle  \log\biggl|\frac{c^F_1}{c^F_0}\biggr| & \text{if }\cS'=\infty.
 \end{cases}
\label{eq:Rieszharmonic}
\end{gather}
\end{lemma}

\begin{proof}
Suppose that $f(\infty)=\infty\in\sF(f)$. Then 
for every $\cS'\in\sP^1\setminus\{\infty\}$, 
by \eqref{eq:potentialpushforward} and \eqref{eq:coeffdenomfixed}, we have
\begin{gather}
  p_{f_*\nu_\infty}(\cS')
 =\int_{\sP^1}p_{\nu_\infty}(f^*\delta_{\cS'})
 -d\cdot I_{\nu_\infty}+\int_{\sP^1}(p_{f_*\nu_\infty})\mu_f.
\tag{\ref{eq:potentialpushforward}$'$} \label{eq:potentialpushforwardfixed}
\end{gather}
We claim that
\begin{gather}
 p_{f_*\nu_\infty}\equiv
\int_{\sP^1}(p_{f_*\nu_\infty})\mu_f\quad\text{on }\sJ(f);\label{eq:constant}
\end{gather}
for, by the equality \eqref{eq:potentialpushforwardfixed} and
$p_{\nu_\infty}\ge I_{\nu_\infty}$ on $\sP^1$
(and Fubini's theorem and \eqref{eq:continuity}), we have
\begin{gather*}
p_{f_*\nu_\infty}\ge
\int_{\sP^1}(p_{f_*\nu_\infty})\mu_f
>-\infty\quad\text{on }\sP^1\setminus\{\infty\},
\end{gather*}
so that $p_{f_*\nu_\infty}\equiv\int_{\sP^1}p_{\mu_f}(f_*\nu_\infty)$
$\mu_f$-a.e.\ on $\sP^1$. Hence the claim follows by 
the strong upper semicontinuity \eqref{eq:susc} of $p_{f_*\nu_\infty}$
on $\sP^1$ and $\sJ(f)=\supp\mu_f$, also recalling Remark \ref{th:potgood}.

Once the identity \eqref{eq:constant} is at our disposal, using also 
the maximum principle for the subharmonic function $p_{f_*\nu_\infty}$
and the latter inequality in \eqref{eq:non-equilibrium}, we have
\begin{gather*}
p_{f_*\nu_\infty}
\equiv\int_{\sP^1}(p_{f_*\nu_\infty})\mu_f
=\sup_{\sJ(f)}p_{f_*\nu_\infty}\ge
%\sup_{\partial D_\infty}p_{f_*\nu_\infty}=
\sup_{\sP^1\setminus D_\infty}p_{f_*\nu_\infty}\ge
I_{\nu_\infty}\quad\text{on }\sJ(f),
\end{gather*}
and integrating
both sides of this inequality against $f_*\nu_\infty$, we have 
$I_{f_*\nu_\infty}\ge I_{\nu_\infty}$ or equivalently 
\begin{gather*}
 f_*\nu_\infty=\nu_\infty\quad\text{on }\sP^1. 
\end{gather*}
Then 
\eqref{eq:Rieszharmonic} 
holds for every $\cS'\in\sP^1\setminus\{\infty\}$
by \eqref{eq:potentialpushforwardinv} and \eqref{eq:coeffdenomfixed}.
Finally, 
integrating both sides in \eqref{eq:invariance} against $\nu_\infty$ 
over $\sP^1$, by \eqref{eq:Fubinienergy} and Fubini's theorem, we compute
\begin{multline*}
\log|c_0^F|+\int_{\sP^1\setminus\{\infty\}}p_{\nu_\infty}(f^*\delta_\infty)
=\int_{\sP^1}\log|F_0(1,\cdot)|_\infty\nu_\infty\\
=d\cdot I_{\nu_\infty}
-\int_{\sP^1}(p_{\mu_f}\circ f)\nu_\infty
-(d-1)\frac{I_{\mu_f}}{2}\\
=d\cdot I_{\nu_\infty}-\int_{\sP^1}(p_{f_*\nu_\infty})\mu_f
-(d-1)\frac{I_{\mu_f}}{2}
=(d-1)I_{\nu_\infty}-(d-1)\frac{I_{\mu_f}}{2},
\end{multline*}
which with \eqref{eq:coeffnum} yields \eqref{eq:Rieszharmonic} for $\cS'=\infty$.
\end{proof}

\begin{remark}
 All the computations in this Section are also valid for $K=\bC$.
\end{remark}

\begin{remark}
The $f$-invariance of $\nu_{\infty}$ in Lemma \ref{th:invharmonic}
is a non-archimedean counterpart to Ma\~n\'e--da Rocha \cite[p.253, before Corollary 1]{MR92}. Their argument was based on solving Dirichlet problem 
using the Poisson kernel on $D_\infty\cup\partial D_\infty$.
A similar machinery has been only partly developed 
in the potential theory on $\sP^1$ (see \cite[\S 7.3, \S7.6]{BR10}).
\end{remark}

\section{Proof of Theorem \ref{th:polynomial}}\label{sec:proof}

Let $K$ be an algebraically closed field that is
complete with respect to a non-trivial and non-archimedean
absolute value $|\cdot|$.
Let $f\in K(z)$ be a rational function of degree $d>1$, and
$F=(F_0,F_1)\in(K[p_0,p_1]_d)^2$ be a normalized lift of $f$.
When $\infty\in\sF(f)$, let us also denote 
$\nu_{\sP^1\setminus D_\infty}=\nu_{\infty,\sP^1\setminus D_\infty}$
by $\nu_\infty$ for simplicity.
If $\mu_f=\nu_\infty$ on $\sP^1$, then not only
$p_{\mu_f}=p_{\nu_\infty}>I_{\nu_\infty}=I_{\mu_f}$ 
on $D_\infty$ but, by the continuity of $p_{\mu_f}$ on
$\sP^1\setminus\{\infty\}$ (by \eqref{eq:continuity}), also
$p_{\mu_f}=p_{\nu_\infty}\equiv I_{\nu_\infty}=I_{\mu_f}$ 
on $\sP^1\setminus D_\infty$. 

Suppose that $\infty\in\sF(f)$, $f(D_\infty)=D_\infty$
(so $D_\infty\subset f^{-1}(D_\infty)$), and $\mu_f=\nu_\infty$ on $\sP^1$.
Then by \eqref{eq:invariance} and 
$p_{\mu_f}\equiv I_{\mu_f}$ on $\sP^1\setminus D_\infty$, we have
\begin{gather}
 \log|F_0(1,\cdot)|_\infty\equiv (d-1)\frac{I_{\mu_f}}{2}=:I_0
\quad\text{on }\sP^1\setminus f^{-1}(D_\infty).\label{eq:denomconst}
% \label{eq:lemniscate}
\end{gather}
Let $\cS_0$ be the point in $\sH^1$ represented by 
the disk $\{z\in K:|z|\le e^{I_0}\}$ in $K$.

Suppose also that $f^{-1}(D_\infty)\setminus D_\infty\neq\emptyset$.
Then $\deg F_0(1,z)>0$. The subset
\begin{gather*}
 U_\infty:=\{\cS\in\sP^1:|F_0(1,\cS)|_\infty>e^{I_0}\}
\end{gather*}
in $\sP^1$ is the component of $\sP^1\setminus(F_0(1,\cdot))^{-1}(\cS_0)$ 
containing $\infty$, and $\partial U_\infty=(F_0(1,\cdot))^{-1}(\cS_0)$. 
By \eqref{eq:denomconst}, we have $U_\infty\subset f^{-1}(D_\infty)$, 
and in turn
\begin{gather*}
 U_\infty\subset D_\infty.
\end{gather*}
For every $w\in f^{-1}(\infty)\setminus\{\infty\}
=(F_0(1,\cdot))^{-1}(0)\subset\{\cS\in\sP^1:|F_0(1,\cS)|_\infty<e^{I_0}\}$, 
let $D_w$ (resp.\ $U_w$)
be the component of $f^{-1}(D_\infty)$ 
(resp.\ the component of $\{\cS\in\sP^1:|F_0(1,\cS)|_\infty<e^{I_0}\}$)
containing $w$. 
Then 
%$D_w$ is a Berkovich Fatou component of $f$ containing $w$, and 
$U_w$ is the component of $\sP^1\setminus(F_0(1,\cdot))^{-1}(\cS_0)$
containing $w$, and 
$\partial U_w$ is a singleton in 
$(F_0(1,\cdot))^{-1}(\cS_0)=\partial U_\infty$.

We claim that $\partial D_\infty$ is a singleton say $\{\cS_\infty\}$ in $\sH^1$ and, moreover, 
that for every $w\in f^{-1}(\infty)\setminus D_\infty
(\neq\emptyset$ under the assumption that $f^{-1}(D_\infty)\setminus D_\infty\neq\emptyset$), 
\begin{gather*}
 \partial D_w=\partial D_\infty(=\{\cS_\infty\});
\end{gather*}
indeed, for every $w\in f^{-1}(\infty)\setminus D_\infty$,
we not only have $D_w\subset U_w$
(since otherwise, we must have $\emptyset\neq D_w\cap U_\infty\subset D_w\cap D_\infty$
so $D_w=D_\infty$, which contradicts $w\not\in D_\infty$) but also
$U_w\subset D_w$ (by \eqref{eq:denomconst}), so that $U_w=D_w$.
This together with $\partial U_w\subset\partial U_\infty$ and 
$U_\infty\subset D_\infty$ yields
\begin{gather*}
\partial D_w=\partial U_w\subset\partial D_\infty 
\end{gather*}
(since otherwise, we must have $\emptyset\neq U_w\cap D_\infty=D_w\cap D_\infty$ so $D_w=D_\infty$, which contradicts $w\not\in D_\infty$).
Hence the claim holds since
$f(\partial U_w)=f(\partial D_w)=\partial D_\infty$ is a singleton in $\sH^1$.

Now fix $w\in f^{-1}(\infty)\setminus D_\infty=f^{-1}(\infty)\setminus 
f(D_\infty)$, and recall that
$\mu_{f^2}=\mu_f$ on $\sP^1$ (so $\sF(f^2)=\sF(f)\ni\infty$ and $D_\infty(f^2)=D_\infty(f)$).
Applying the above argument to $f^2$ and
every $w'\in f^{-1}(w)\subset f^{-2}(\infty)\setminus D_\infty$, we have 
\begin{gather*}
 \partial D^{(2)}_{w'}=\{\cS_\infty\},
\end{gather*}
where $D^{(2)}_{w'}$ is the component of $f^{-2}(D_\infty)$ containing $w'$
or equivalently the component of $f^{-1}(D_w)$ containing $w'$.
Consequently, 
\begin{gather*}
 f^{-1}(\cS_\infty)=f^{-1}(\partial D_w)
\subset\bigcup_{w'\in f^{-1}(w)}\partial D^{(2)}_{w'} =\{\cS_\infty\},
\end{gather*}
%so that $f^{-1}(\cS_\infty)=\{\cS_\infty\}$,
so $f$ has a potential good reduction. \qed

\section{Proof of Theorem \ref{th:counterexample}}
\label{sec:counterexample}

Pick a prime number $p$, and let us denote $|\cdot|_p$ by $|\cdot|$ for simplicity.
Set
\begin{gather*}
f(z):=\frac{z^p-z}{p}\in\bQ[z]\quad\text{and}\quad
A(z):=\frac{az+b}{cz+d}\in\PGL(2,\bZ_p).
\end{gather*} 
If $|c|<1$, then $|ad-bc|=|ad|=1$, so that $|a|=|d|=1$. 

Let $\sJ(f\circ A)$ and $\sF(f\circ A)$
denote the Berkovich Julia and Fatou sets in $\sP^1(\bC_p)$
of $f\circ A$ as an element of $\bC_p(z)$ of degree $p$, 
respectively.

\subsection{Computing $\sJ(f\circ A)$}

\begin{lemma}\label{th:invariance}
If $|c|<1$, then $(f\circ A)^{-1}(\bZ_p)=\bZ_p$. 
\end{lemma}

\begin{proof}
We first claim that for every $z \in \mathbb{Z}$, $p\cdot f(z)=z^p-z\equiv 0$ 
modulo $p\bZ$; indeed, when is obvious if $z=0$ modulo $p\bZ$, and is the case
by Fermat's Little Theorem when $z\neq 0$ modulo $p\bZ$. By this claim,
we have $f(\bZ)\subset\bZ$ 
(cf.\ \cite{WoodcockSmart98}), and in turn $f(\bZ_p)\subset\bZ_p$
by the continuity of the action of $f$ on $\bQ_p$ 
and the density of $\bZ$ in $\bZ_p$.
Next, we claim that $f^{-1}(\bZ_p)\subset\bZ_p$ or equivalently that
for every $w\in\bZ_p$, $f^{-1}(w)\subset\bZ_p$; 
indeed, setting $W(X):=X^p-X-pw\in\bZ_p[X]$ of degree $p$,
we have already seen that
the reduction $\overline{W}(X)=X^p-X\in\bF_p[X]$ of $W$ modulo $p\bZ_p$
has $p$ distinct roots $\overline{0},\ldots,\overline{p-1}$ in $\bF_p$.
%at each of which we moreover have $\partial\overline{W}/\partial X=1\neq 0$. 
Hence by Hensel's lemma (see, e.g., \cite[\S3.3.4, Proposition 3]{BGR}), 
$W(X)$ also has $p$ distinct roots in $\bZ_p$, and has no other roots in $\overline{\bQ_p}$, so the claim holds. 
We have seen that $f^{-1}(\bZ_p)=\bZ_p$. 

Suppose now that $|c|<1$. Then for every $z\in\bZ_p$, 
we have $|cz|<1=|d|$, so that $|A(z)|=|az+b|/|cz+d|=|az+b|\le 1$.
Hence $A(\bZ_p)\subset\bZ_p$, and similarly $A^{-1}(\bZ_p)\subset\bZ_p$
since $A^{-1}(z)=(dz-b)/(-cz+a)\in\PGL(2,\bZ_p)$ and $|-c|=|c|<1$. 
Now we conclude that $(f\circ A)^{-1}(\bZ_p)=A^{-1}(\bZ_p)=\bZ_p$.
\end{proof}

\begin{lemma}\label{th:implicit}
 If $|b|\ll 1$ and $|c|\ll 1$, then $f\circ A$ has an attracting fixed point 
 $z_A$ in $\bP^1(\bC_p)\setminus\bZ_p$, which tends to $\infty$ 
 as $(a,b,c,d)\to(1,0,0,1)$ in $(\bZ_p)^4$.
 Moreover, if in addition $c\neq 0$, then $z_A\in\bC_p\setminus\bZ_p$
 and $(f\circ A)^{-1}(z_A)\neq\{z_A\}$.
\end{lemma}

\begin{proof}
 Since $f^{-1}(\infty)=\{\infty\}$ and $\deg f=p>1$, 
 the former assertion holds 
also
 noting that $(\Id_{\bP^1(\bC_p)})'\equiv 1\neq 0$ and applying
 an implicit function theorem to the equation $(f\circ A)(z)=z$
 near $(z,a,b,c,d)=(\infty,1,0,0,1)$ in $\bP^1(\bC_p)\times(\bZ_p)^4$
 (see, e.g., \cite[(10.8)]{Abhyankar64}). 
 Moreover, since 
 $f'(z)=z^{p-1}-p^{-1}$ and $f''(z)=(p-1)z^{p-2}$, 
 the point $A^{-1}(\infty)=-d/c$ is the unique point $z\in\bP^1(\bC_p)$ 
 such that $\deg_z(f\circ A)=p(=\deg(f\circ A))$, and on the other hand, 
 if in addition $c\neq 0$, then the point $A^{-1}(\infty)$
 is $\neq\infty$ and
 is not fixed by $f\circ A$.
 Hence the latter assertion holds also noting that
 $(f\circ A)(\infty)\neq\infty$ if in addition $c\neq 0$.
\end{proof}

Consequently, if $|b|\ll 1$ and $|c|\ll 1$, then
\begin{gather}
\sJ(f\circ A)=\bZ_p=\sP^1(\bC_p)\setminus D_{z_A}(f\circ A);\label{eq:Julia}
\end{gather}
indeed, by Lemma \ref{th:invariance} (and \eqref{eq:equidist}),
if $|c|<1$, then $\sJ(f\circ A)\subset\bZ_p$.
If in addition $|b|\ll 1$ and $|c|\ll 1$, then 
by Lemma \ref{th:implicit} (and $\bZ_p\subset\bC_p$), 
we have $\sF(f\circ A)=D_{z_A}(f\circ A)$, 
which is an (immediate) attractive basin of $f$ 
(see \cite[Th\'eor\`eme de Classification]{Juan03}) 
associated with $z_A\in\bP^1(\bC_p)\setminus\bZ_p$, 
and in turn have $\sJ(f\circ A)=\bZ_p$
since $(f\circ A)(\bZ_p)\subset\bZ_p$ by Lemma \ref{th:invariance}.

\subsection{Computing energies and measures}
Since 
\begin{gather*}
 \Res\bigl(p^{1/2}\cdot\bigl(z_0^p,z_0^pf(z_1/z_0)\bigr)\bigr)
 =(p^{1/2})^{2p}\cdot(1^{p-p}\cdot(p^{-1})^{p-0}\cdot 1)=1,
\end{gather*}
the pair 
\begin{gather*}
 F(z_0,z_1):=p^{1/2}\cdot\bigl(z_0^p,
z_0^pf(z_1/z_0)\bigr)\in(\bQ[z_0,z_1]_p)^2 
\end{gather*}
is a normalized lift of $f$. 
Noting that $|\Res(az_0+bz_1,cz_0+dz_1)|=|ad-bc|=1$ and
using a formula for the homogeneous resultant of the composition
of homogeneous polynomial maps
(see, e.g., \cite[Exercise 2.12]{SilvermanDynamics}), we also have
$\bigl|\Res\bigl(F(az_0+bz_1,cz_0+dz_1)\bigr)\bigr|
 =\bigl|(\Res F)^1\cdot(\Res(az_0+bz_1,cz_0+dz_1))^p\bigr|
 =1$,
so that 
\begin{multline*}
 F_A(z_0,z_1)
:=F(az_0+bz_1,cz_0+dz_1)\\
=p^{1/2}\cdot\biggl((az_0+bz_1)^p,
\frac{(cz_0+dz_1)^p-(az_0+bz_1)^{p-1}(cz_0+dz_1)}{p}\biggr)
\in(\bQ_p[z_0,z_1]_p)^2
\end{multline*} 
is a normalized lift of $f\circ A$. For every $n\in\bN$, write 
\begin{gather*}
 (F_A)^n=\bigl(F_{A,0}^{(n)},F_{A,1}^{(n)}\bigr)\in(\bQ_p[z_0,z_1]_{p^n})^2.
\end{gather*}

\begin{lemma}\label{th:infty}
If $|b|<1$ and $|c|<1$, then 
\begin{gather*}
 g_{f\circ A}(\infty)
\biggl(=\sum_{j=1}^\infty\Bigl(\frac{\log\|(F_A)^j(0,1)\|}{p^j}-\frac{\log\|(F_A)^{j-1}(0,1)\|}{p^{j-1}}\Bigr)\biggr)
=\frac{\log p}{2(p-1)}.
\end{gather*}
\end{lemma}

\begin{proof}
Suppose that $|b|<1$ and $|c|<1$(, and recall $|p|=p^{-1}<1$).
Then for every $(z_0,z_1)\in\bC_p^2$, if $|z_0|<|z_1|$, then
\begin{gather*}
 |cz_0+dz_1|=|dz_1|=|z_1|>\max\{|az_0|,|bz_1|\}\ge|az_0+bz_1|
\end{gather*}
so
\begin{gather*}
|F_{A,0}^{(1)}(z_0,z_1)|<|F_{A,1}^{(1)}(z_0,z_1)|\quad\text{and}\\
\begin{aligned}
 \|F_A(z_0,z_1)\|=&|F_{A,1}^{(1)}(z_0,z_1)|=p^{1/2}|cz_0+dz_1|^p\\
 =&p^{1/2}|dz_1|^p=p^{1/2}|z_1|^p
 =p^{1/2}\|(z_0,z_1)\|^p.
\end{aligned}
\end{gather*}
Hence inductively, for every $n\in\bN$, we have
$|F_{A,0}^{(n)}(0,1)|<|F_{A,1}^{(n)}(0,1)|$, and moreover
\begin{multline*}
\sum_{j=1}^n\Bigl(\frac{\log\|(F_A)^j(0,1)\|}{p^j}-\frac{\log\|(F_A)^{j-1}(0,1)\|}{p^{j-1}}\Bigr)
=\sum_{j=1}^n\frac{\frac{1}{2}\log p}{p^j}\\
=\Bigl(\frac{1}{2}\log p\Bigr)\frac{(1/p)(1-1/p^n)}{1-1/p}\to 
\Bigl(\frac{1}{2}\log p\Bigr)\frac{1}{p-1}
%=g_{f\circ A}(\infty)
\end{multline*}
as $n\to\infty$.
\end{proof}

\begin{lemma}
 If $(a,b,c,d)$ is close enough to $(1,0,0,1)$ in $(\bZ_p)^4$, then 
 \begin{gather*}
 \mu_{f\circ A}=\nu_{\infty,\bZ_p}=\nu_{z_A,\bZ_p}\quad\text{on }\sP^1(\bC_p).
 \end{gather*}
\end{lemma}
\begin{proof}
If $|b|\ll 1$ and $|c|\ll 1$, then by \eqref{eq:Julia} and $\bZ_p\subset\bC_p$,
we have 
\begin{gather*}
 \infty\in\sF(f\circ A)=D_{z_A}(f\circ A)=\sP^1(\bC_p)\setminus\bZ_p. 
\end{gather*}
Then
by \eqref{eq:energyinfty} and Lemma \ref{th:infty}, we have
\begin{gather*}
 I_{\infty,\mu_{f\circ A}}=-2\cdot\biggl(\frac{\log p}{2(p-1)}\biggr)
 =\log p^{\frac{-1}{p-1}},
\end{gather*}
and in particular,
recalling $\nu_{\infty,\bZ_p}=\mu_f$ on $\sP^1(\bC_p)$, also
$I_{\infty,\nu_{\infty,\bZ_p}}=I_{\infty,\mu_f}=\log p^{\frac{-1}{p-1}}$ 
(for a non-dynamical and more direct computation of 
$I_{\infty,\nu_{\infty,\bZ_p}}$, see \cite{BakerHsia05}).
 Now the first equality holds by the uniqueness of the equilibrium mass distribution
 on the non-polar compact subset $\bZ_p$ in $\sP^1(\bC_p)$. The second equality 
 holds since $z_A$ tends to $\infty$ as $(a,b,c,d)\to(1,0,0,1)$
 in $(\bZ_p)^4$ (by Lemma \ref{th:implicit}), also
 recalling Observation \ref{th:perburb}.
\end{proof}

\begin{remark}\label{th:easy}
 If $0<|c|\ll 1$ and $|b|\ll 1$, then 
 $(f\circ A)(\infty)\neq\infty\in\sF(f\circ A)$, 
 $(f\circ A)(D_\infty(f\circ A))=D_\infty(f\circ A)$,
 $\sJ(f\circ A)\not\subset\sH^1$ (indeed $\sJ(f\circ A)\subset\bC_p$), and 
 $\mu_{f\circ A}=\nu_{\infty,\sP^1\setminus D_\infty}$ on $\sP^1$.
\end{remark}

\subsection{Conclusion}
If $|b|\ll 1$ and $0<|c|\ll 1$, then setting 
$m_A(z):=\frac{1}{z-z_A}\in\PGL(2,\bC_p)$, 
the rational function
\begin{gather*}
 g_A:=m_A\circ(f\circ A)\circ m_A^{-1}\in\bC_p(z) 
\end{gather*}
is of degree $p$ and satisfies
$g_A(\infty)=\infty,|g_A'(\infty)|<1, g_A^{-1}(\infty)\neq\{\infty\}$, and
$\infty\in m_A(D_{z_A}(f\circ A))=D_\infty(g_A)$.
If moreover $(a,b,c,d)$ is close enough to $(1,0,0,1)$ in $(\bZ_p)^4$, then 
also recalling Observations \ref{th:affine} and \ref{th:involution},
we have
\begin{multline*}
 \mu_{g_A}=(m_A)_*\mu_{f\circ A}
=(m_A)_*\nu_{\infty,\bZ_p}=(m_A)_*\nu_{z_A,\bZ_p}\\
=(m_A)_*\nu_{z_A,\sP^1\setminus D_{z_A}(f\circ A)}
=\nu_{\infty,\sP^1\setminus D_{\infty}(g_A)}
\quad\text{on }\sP^1(\bC_p).
\end{multline*}
Now the proof of Theorem \ref{th:counterexample} is complete. \qed

\begin{acknowledgement}
The first author was partially supported by JSPS Grant-in-Aid 
for Scientific Research (C), 15K04924 and 19K03541.
\end{acknowledgement} 

\def\cprime{$'$}

\end{document}